\documentclass{amsart}
\usepackage{amsmath}
\usepackage{amssymb}
\usepackage{amsfonts}
\usepackage{amscd}
\usepackage{verbatim}
\usepackage{color}

\begin{document}
\topmargin-0.4in
\textheight9.5in
\title{Questions}

\numberwithin{equation}{section}
\newtheorem{thm}[equation]{Theorem}
\newtheorem{cor}[equation]{Corollary}
\newtheorem{lem}[equation]{Lemma}
\newtheorem{prop}[equation]{Proposition}
\newtheorem{qu}[equation]{Question}
\newtheorem{alg}[equation]{Algorithm}
\newtheorem{mproc}[equation]{Procedure}
\newtheorem{defn}[equation]{Definition}
\newtheorem{hyp}[equation]{Standing Assumption}
\newtheorem{rem}[equation]{Remark}
\newtheorem{ex}[equation]{Example}
\newtheorem{exerc}[equation]{Exercise}
\newtheorem{conjecture}[equation]{Conjecture}

\newcommand{\remref}[1]{Remark~\ref{#1}}
\newcommand{\corref}[1]{Corollary~\ref{#1}}
\newcommand{\defref}[1]{Definition~\ref{#1}}
\newcommand{\exercref}[1]{Exercise~\ref{#1}}
\newcommand{\exref}[1]{Example~\ref{#1}}
\newcommand{\lemref}[1]{Lemma~\ref{#1}}
\newcommand{\propref}[1]{Proposition~\ref{#1}}
\newcommand{\thmref}[1]{Theorem~\ref{#1}}
\newcommand{\secref}[1]{Section~\ref{#1}}
\newcommand{\mprocref}[1]{Procedure~\ref{#1}}

\newtheorem{diagl}{Diagrams by Level}
\renewcommand{\thediagl}{}
\newtheorem{diagd}{Diagrams by Depth}
\renewcommand{\thediagd}{}

\newcommand{\ol}{\overline}
\newcommand{\ul}{\underline}
\newcommand{\olsd}[1]{\overline{\overline{#1}}} 
\newcommand{\abar}{\ol{\alpha}}
\newcommand{\eps}{\epsilon}
\newcommand{\lra}{\longrightarrow}
\newcommand{\ra}{\rightarrow}
\newcommand{\rat}{\rightarrowtail}
\newcommand{\thra}{\twoheadrightarrow}
\newcommand{\La}{\Leftarrow}
\newcommand{\Ra}{\Rightarrow}
\newcommand{\Lra}{\Leftrightarrow}
\newcommand{\dhr}{\downharpoonright}
\newcommand{\wdh}{\operatorname{wdh}}
\newcommand{\width}{\operatorname{width}}
\newcommand{\invs}{^{-1}}
\newcommand{\br}{\,\overline{\phantom{n}}\,} 
\newcommand{\brsd}{\,\overline{\overline{\phantom{n}}}\,} 
\newcommand{\dcup}{\overset{\bullet}{\cup}}
\newcommand{\vpl}{\varprojlim}
\newcommand{\defeq}{\overset{\text{\tiny def}}{=}}
\newcommand{\dbf}{\itshape\bfseries} 
\newcommand{\colr}{\color{red}}
\newcommand{\colb}{\color{blue}}
\newcommand{\pht}{\operatorname{hgt}}
\newcommand{\hgt}{\pht}
\newcommand{\ord}{\operatorname{ord}}
\newcommand{\Hgt}[2]{{\mathbb{Q}^{{#1}}_{{#2}}}}
\newcommand{\Hom}{\operatorname{Hom}}
\newcommand{\cHom}{\operatorname{cHom}}
\newcommand{\Ext}{\operatorname{Ext}}
\newcommand{\End}{\operatorname{End}}
\newcommand{\SEnd}{\operatorname{SEnd}}
\newcommand{\Aut}{\operatorname{Aut}}
\newcommand{\rk}{\operatorname{rk}}
\newcommand{\rp}{\operatorname{r_{p}}}
\newcommand{\Coker}{\operatorname{Coker}}
\renewcommand{\Im}{\operatorname{Im}}
\newcommand{\Coim}{\operatorname{Coim}}
\newcommand{\Ker}{\operatorname{Ker}}
\newcommand{\tp}{\operatorname{tp}}
\newcommand{\ts}{\operatorname{Tst}}
\newcommand{\dm}{\dim}
\newcommand{\nriso}{\cong_{\text{\rm nr}}}  
\newcommand{\tpiso}{\cong_{\text{\rm tp}}}  
\newcommand{\quiso}{\cong_{\text{\rm qu}}}  
\newcommand{\IT}{\operatorname{IT}}
\newcommand{\OT}{\operatorname{OT}}
\newcommand{\C}{{\mathbb C}}         
\newcommand{\N}{{\mathbb N}}         
\newcommand{\Nz}{{\mathbb N}_0}      
\newcommand{\Z}{{\mathbb Z}}
\newcommand{\Q}{{\mathbb Q}}
\newcommand{\Reals}{{\mathbb R}}
\newcommand{\Primes}{{\mathbb P}}
\newcommand{\Types}{{\mathbb T}}
\newcommand{\bbT}{{\mathbb T}}
\newcommand{\bbS}{{\mathbb S}}
\newcommand{\bbA}{{\mathbb A}}
\newcommand{\purin}{\vartriangleleft}
\newcommand{\lcm}{\operatorname{lcm}}
\newcommand{\0}{{\bf 0}}

\newcommand{\Mon}{\operatorname{Mon}}
\newcommand{\Stab}{\operatorname{Stab}}
\newcommand{\Orb}{\operatorname{Orb}}
\newcommand{\ReMon}{\operatorname{ReMon}}
\newcommand{\IsoCl}{\operatorname{IsoCl}}
\newcommand{\TIsoCl}{\operatorname{TypIsoCl}}
\newcommand{\TypIsoCl}{\operatorname{TypIsoCl}}
\newcommand{\Tc}{\operatorname{T_{cr}}}
\newcommand{\Tmc}{\operatorname{T_{mcr}}}
\newcommand{\TypAut}{\operatorname{TypAut}}
\newcommand{\TypHom}{\operatorname{TypHom}}
\newcommand{\TypEnd}{\operatorname{TypEnd}}
\newcommand{\SpAut}{\operatorname{SpAut}}
\newcommand{\R}{\operatorname{R}}
\newcommand{\Regg}{\operatorname{Regg}}
\newcommand{\E}{\operatorname{E}}
\newcommand{\Det}{\operatorname{Det}}
\newcommand{\TypH}{\textup{Typ}{\mathbb H}}
\newcommand{\lv}{\operatorname{lv}}     
\newcommand{\dpp}{\operatorname{dp}}    
\newcommand{\dc}{\operatorname{d_{cr}}}
\newcommand{\crgi}{\operatorname{crgi}} 
\newcommand{\rgi}{\operatorname{rgi}}
\newcommand{\xxx}{^{\phantom{-1}}}
\newcommand{\Gen}{\operatorname{Gen}}
\newcommand{\Pinf}{\operatorname{\Primes_\infty}}
\newcommand{\ccy}[1]{\lceil #1 \rceil}  
\newcommand{\rtp}[1]{\|#1\|}            
\newcommand{\Soc}{\operatorname{Soc}}
\newcommand{\Doc}{\operatorname{Doc}}
\newcommand{\Rad}{\operatorname{Rad}}
\newcommand{\NRad}{\operatorname{NRad}}
\newcommand{\hRad}{\operatorname{Rad}}
\newcommand{\tor}{\operatorname{tor}}
\newcommand{\opS}{\operatorname{S}}
\newcommand{\opT}{\operatorname{T}}
\newcommand{\pct}[1]{#1_{\wr p}}        
\newcommand{\rwd}{\operatorname{rwd}}   
\newcommand{\Fx}{\operatorname{Fx}}     
\newcommand{\dust}{}

\newcommand{\CRQ}{{\rm CRQ}} 
\newcommand{\BRCRQ}{{\rm BRCRQ}} 
\newcommand{\RCRQ}{{\rm RCRQ}} 
\newcommand{\fBRCRQ}{{\rm fBRCRQ}} 
\newcommand{\pRCRQ}{{\rm pRCRQ}} 
\newcommand{\olP}{\overline{P}}
\newcommand{\calM}{\mathcal{M}}
\newcommand{\calP}{\mathcal{P}}
\newcommand{\calH}{\mathcal{H}}
\newcommand{\calS}{\mathcal{S}}
\newcommand{\calC}{\mathcal{C}}
\newcommand{\calD}{\mathcal{D}}
\newcommand{\calL}{\mathcal{L}}
\newcommand{\calU}{\mathcal{U}}
\newcommand{\calV}{\mathcal{V}}
\newcommand{\calN}{\mathcal{N}}
\newcommand{\calT}{\mathcal{T}}
\newcommand{\ptn}{\operatorname{ptn}}

\newcommand{\modR}{{\rm Mod}-$R$}
\newcommand{\tffr}{{\rm TFFR\ }}
\newcommand{\LCA}{\mathbf{LCA}}
\newcommand{\QTFFR}{$\Q\mathcal{A}$}
\newcommand{\opM}{\mathbb{M}}
\newcommand{\CC}{\mathcal{C}}
\newcommand{\rh}{\textup{rh}}           
\newcommand{\hc}{\textup{hc}}           
\newcommand{\CDs}{\textup{CD}}          
\newcommand{\FEE}{\textup{FEE}}         
\newcommand{\RFEE}{\textup{RFEE}}       
\newcommand{\CDSG}{\textup{CDSG}}       
\newcommand{\catX}{\mathbf{X}}
\newcommand{\ACDtp}{\textup{ACD}_{tp}}  
\newcommand{\ACDe}{\textup{ACD}_{(e)}}  
\newcommand{\A}{\mathcal{A}}
\newcommand{\Mset}{\mathcal{M}}
\newcommand{\qusum}{\in_\text{\QTFFR}}
\newcommand{\RH}{\textup{RH}}
\newcommand{\cdrepa}{\operatorname{cdrepa}}
\newcommand{\HC}{\operatorname{hc}}
\newcommand{\Reg}{\operatorname{Reg}}
\newcommand{\cdrep}{\operatorname{cdRep}}
\newcommand{\cdRep}{\operatorname{cdRep}}
\newcommand{\hcdrep}{\operatorname{hcdRep}}
\newcommand{\hcdRep}{\operatorname{hcdRep}}
\newcommand{\Rep}{\operatorname{Rep}}
\newcommand{\rep}{\operatorname{Rep}}
\newcommand{\Mat}{\operatorname{Mat}}
\newcommand{\uMat}{\operatorname{uMat}}
\newcommand{\UMat}{\operatorname{uMat}}
\newcommand{\urep}{\operatorname{uRep}}
\newcommand{\uRep}{\operatorname{uRep}}
\newcommand{\UReg}{\operatorname{U-Reg}}

\newcommand\Jac{\operatorname{Jac}}
\newcommand\Nil{\operatorname{\mathcal{N}}}
\newcommand\Id{\operatorname{Idl}}
\newcommand\Mo{\operatorname{Mdl}}
\newcommand\Sin{\operatorname{\Delta}}
\newcommand\subm{\operatorname{SM}}

\newcommand{\krow}[1]{\overset{\rightharpoonup}{#1}}
\newcommand{\kcol}[1]{{\,#1}^\downharpoonright}
\newcommand{\Mring}{\mathcal{M}}             
\newcommand{\dgo}{\operatorname{dgo}}
\newcommand{\transp}{^{\text{tr}}}
\newcommand{\diag}{\operatorname{diag}}
\newcommand{\subdiag}{\operatorname{subdiag}}
\newcommand{\adj}{\operatorname{adj}}
\newcommand{\Ann}{\operatorname{Ann}}
\newcommand{\rteq}{=}
\newcommand{\lteq}{=}
\newcommand{\gcld}{\operatorname{gcld}}
\newcommand{\gcrd}{\operatorname{gcrd}}
\newcommand{\mlt}[1]{\mathbf{#1}}        
\newcommand{\W}{\bigvee\!\!\!\!\bigvee}  
\newcommand{\basis}[1]{\mathcal{#1}}     %
\newcommand{\rmx}[1]{{\EuFrak{m}}_{#1}}  
\newcommand{\DO}[2]{\Pi_{#1,#2}}         
\newcommand{\cred}{\overset{c}{\rightarrow}} 
\newcommand{\rred}{\overset{r}{\rightarrow}} 
\newcommand{\Snf}{\operatorname{Snf}}
\newcommand{\row}{\operatorname{rsp}}
\newcommand{\rsp}{\operatorname{rsp}}

\newcommand{\ann}{\operatorname{Ann}}
\newcommand{\smin}{\subseteq^{\circ}}    
\newcommand{\lgin}{\subseteq^{*}}  
\newcommand{\sdof}{\subseteq^\oplus}     
\newcommand{\pain}{{\rm pi}}             
\newcommand{\Tot}{\operatorname{Tot}}    
\newcommand{\RTD}{RT--decomposition}
\newcommand{\spt}{\operatorname{spt}}
\newcommand{\LR}{\rm LR}
\newcommand{\SR}{\rm SR}
\newcommand{\li}{\rm li}
\newcommand{\lp}{\rm lp}
\newcommand{\efJ}{{\EuFrak{J}}}     
\newcommand{\efI}{{\EuFrak{I}}}     
\newcommand{\efP}{{\EuFrak{P}}}     
\newcommand{\txp}{\rm 2--EP}        
\newcommand{\dtxp}{{\rm D2--EP}}    
\newcommand{\xds}{\overset{\bullet}{\oplus}}  
\newcommand{\bxds}{\overset{\bullet}{\bigoplus}}  
\newcommand{\lstn}{\textrm{lstn}}
\newcommand{\SemiReg}{\operatorname{Semi-Reg}}
\newcommand{\Sing}{\Delta}

\newcommand{\bv}{\big\vert}
\newcommand{\bvv}{\big\vert\!\big\vert}
\newcommand{\Bv}{\Big\vert}
\newcommand{\Bvv}{\Big\vert\!\Big\vert}
\newcommand{\abf}{\bf}
\newcommand{\bst}{\heartsuit}
\newcommand{\btr}{\triangle}
\newcommand{\bsq}{\Box}
\newcommand{\sq}{\Box}
\newcommand{\trf}{\operatorname{trf}}
\newcommand{\Gstar}{\mathcal{G}^*}
\newcommand{\func}{\operatorname}

\newcommand{\gix}{\index}                 
\newcommand{\nix}{\index}                 
\setlength{\parindent}{0mm}

\newcounter{noteno}
\newenvironment{note}{\begin{quotation}
\footnotesize\addtocounter{noteno}{1}
\begin{trivlist}%
\item[\hskip \labelsep {\bf Note \thenoteno}]}
{\end{trivlist}\end{quotation}}

\newcommand{\wn}{\hspace*{0.6mm}0\hspace*{0.6mm}}
\newcommand{\wpi}{\hspace*{0.6mm}p I\hspace*{0.6mm}}
\newcommand{\ptwoi}{\hspace*{0.6mm}p^2{\!}I\hspace*{0.6mm}}
\newcommand{\wprime}{\hspace*{0.6mm}p\hspace*{0.6mm}}
\newcommand{\wptwo}{\hspace*{0.6mm}p^2\hspace*{0.6mm}}


\title[MainDecTG2.tex \quad \today]{The Main Decomposition of
  Finite--Dimensional Protori}

\author {Wayne Lewis}
\address {Department of Mathematics \\ University of Hawaii at Manoa
  \\ 2565 McCarthy Mall, Honolulu, HI 96922, USA}
\email{waynel@math.hawaii.edu} 

\author {Adolf Mader} 
\address {Department of Mathematics \\
  University of Hawaii at Manoa \\ 2565 McCarthy Mall, Honolulu, HI
  96922, USA} 
\email{adolf@math.hawaii.edu} 

\subjclass[2010]{20K15, 20K20, 20K25, 22B05, 22C05, 22D35}

\keywords{compact abelian group, solenoid, torus, torus free, finite
  dimension, completely factored, torsion-free abelian group, finite
  rank, completely decomposable, clipped}

\begin{abstract} A \emph{protorus} is a compact connected abelian
  group. We use a result on finite rank torsion-free abelian groups
  and Pontryagin Duality to considerably generalize a well--known
  factorization of a finite dimensional protorus into a product of a
  torus and a torus free complementary factor. We also classify by
  types the solenoids of Hewitt and Ross.
\end{abstract}

\maketitle

\section{Introduction} 
Pontryagin duality restricts to a category equivalence between the
category of discrete abelian groups and the category of compact
abelian groups, \cite[Theorem~7.63]{HM13},
\cite[Proposition~7.5(i)]{HM13}, \cite[(24.3)]{HR63},
\cite[(23.17)]{HR63}. The theory of discrete abelian groups is
well--developed, \cite{F15}, often with a focus on direct
decompositions. Thanks to Pontryagin duality there are equivalent
results valid for compact abelian groups. We will study duals of
completely decomposable abelian groups and in particular the ``Main
Decomposition'' of torsion-free abelian groups of finite rank that
says that every such group is the direct sum of a completely
decomposable summand and a complementary summand that has no
completely decomposable direct summands, \cite[Theorem~2.5]{MS18}. Our
main results are \thmref{main decomposition of compact groups} and
\thmref{main uniqueness}. We also classify solenoids, i.e., one
dimensional connected compact abelian groups, in terms of ``types''
and eliminate redundancies among the $\mathfrak{a}$--adic solenoids of
Hewitt \& Ross, \corref{special a's and type}.

\section{Background} 

All groups considered in this paper are abelian groups, so in the
following ``group'' means ``abelian group''. In the category of
topological groups one has to distinguish between purely algebraic
homomorphisms and algebraic and continuous morphism. By $\Hom(G,H)$ we
denote the set of algebraic morphisms, i.e., the homomorphisms, and by
${\operatorname{cHom}}(G,H)$ we denote the subset of continuous homomorphisms. In the
following $A \cong B$ means that $A,B$ are algebraically isomorphic,
while $G \cong_{t} H$ means that the groups $G,H$ are algebraically
and topologically isomorphic, i.e., isomorphic in the category of
topological groups.  The Pontryagin Dual of a discrete group $A$ is
$A^\vee = \Hom(A,\bbT)$ equipped with the compact--open topology, and
the Pontryagin Dual of a compact group $G$ is the discrete group
$G^\vee = {\operatorname{cHom}}(G,\bbT)$ where $\bbT \overset{\rm def}{=} \Reals/\Z$.

For any topological group $G$, the construct $\mathfrak{L}(G) \overset{\rm
  def}{=} {\operatorname{cHom}}(\Reals,G)$ is its {\dbf Lie Algebra}. It is
well--known that $\mathfrak{L}(G)$ is a real topological vector space via
the stipulation $(r f)(x) \overset{\rm def}{=} f(r x)$ where $f \in
\mathfrak{L}(G)$ and $r,x \in \Reals$, and with the topology of uniform
convergence on compact sets \cite[Definition~5.7,
Proposition~7.36]{HM13}. We define the {\dbf dimension $\dim(G)$ of
  $G$}, by $\dim(G) = \dim_\Reals \mathfrak{L}(G)$.  For a compact group $G$
of positive dimension, $\dim(G) = \rk(\Q \otimes G^\vee)$,
\cite[Theorem~8.22]{HM13}.

\begin{rem}\label{0 dimension} Let $G$ be a compact group. Then the
  following are equivalent. 
\begin{enumerate}
\item $\dim(G) = 0$,
\item $G$ is totally disconnected,
\item $G^\vee$ is a torsion group,
\item $\rk(\Q \otimes G) = 0$,
\end{enumerate} 
\end{rem} 

\begin{proof} (1) $\Lra$ (2): \cite[Proposition~9.46]{HM13}.

(2) $\Lra$ (3): \cite[Corollary~8.5]{HM13}.

(3) $\Lra$ (4): Trivial.
\end{proof}

\section{Short Exact Sequences}

Let $G,H,K$ be topological groups. The sequence $E: H
\overset{f}{\rat} G \overset{g}{\thra} K$ is exact if
\begin{itemize} 
\item $E$ is exact as a sequence in the category of discrete abelian
  groups, i.e., $f$ and $g$ are homomorphisms, $f$ is injective, $g$
  is surjective, and $\Im(f) = \Ker(g)$,
\item $f$ and $g$ are continuous. 
\end{itemize} 

\begin{lem}\label{duals of exact sequences} \hfill 
\begin{enumerate}
\item Let $E: A \overset{f}{\rat} B \overset{g}{\thra} C$ be an exact
  sequence of discrete abelian groups. Then $E^\vee : C^\vee
  \overset{g^\vee}{\rat} B^\vee \overset{f^\vee}{\thra} A^\vee$ is
  exact as a sequence of topological groups where $C^\vee, B^\vee,
  A^\vee$ are compact Hausdorff groups. Furthermore, the maps $g^\vee$
  and $f^\vee$ are open maps onto their images.
\item Let $E: H \overset{f}{\rat} G \overset{g}{\thra} K$ be an exact
  sequence of topological groups, and assume further that $H, G, K$ are
  compact Hausdorff groups. Then $E^\vee : K^\vee
  \overset{g^\vee}{\rat} G^\vee \overset{f^\vee}{\thra} H^\vee$ is
  exact in the category of discrete abelian groups.
\end{enumerate} 
\end{lem}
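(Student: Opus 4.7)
My plan is to prove (1) by directly checking the four conditions defining exactness of $E^\vee$ and then using basic compact-Hausdorff topology for openness; for (2) the argument is largely parallel except for the surjectivity of $f^\vee$, which is the delicate point.

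For part (1), the groups $A^\vee, B^\vee, C^\vee$ are compact Hausdorff by Pontryagin duality for discrete groups, and $f^\vee, g^\vee$ are continuous by functoriality. Injectivity of $g^\vee$ is immediate: if $\chi \in C^\vee$ satisfies $\chi \circ g = 0$, then $\chi$ vanishes on $g(B)=C$. Surjectivity of $f^\vee$ uses that $\bbT = \Reals/\Z$ is a divisible abelian group and hence injective in the category of abelian groups, so any $\chi : A \to \bbT$ extends along the injection $f$ to some $\tilde\chi : B \to \bbT$ with $f^\vee(\tilde\chi) = \chi$. The inclusion $\Im(g^\vee) \subseteq \Ker(f^\vee)$ follows from $\chi \circ g \circ f = 0$, and the reverse inclusion from the observation that $\psi \circ f = 0$ means $\psi$ kills $\Im(f) = \Ker(g)$, so $\psi$ factors through $B/\Ker(g) \cong C$.

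For the openness statements in (1), $g^\vee$ is a continuous injection from the compact space $C^\vee$ into the Hausdorff space $B^\vee$, so it is a homeomorphism onto its image. Meanwhile $f^\vee$ is a continuous surjection between compact Hausdorff groups, and the induced continuous bijection $B^\vee/\Ker(f^\vee) \to A^\vee$ is automatically a homeomorphism, making $f^\vee$ an open quotient map. For part (2), the duals $H^\vee, G^\vee, K^\vee$ are discrete by Pontryagin duality for compact groups, so exactness reduces to its algebraic content. Injectivity of $g^\vee$ and the inclusion $\Im(g^\vee) \subseteq \Ker(f^\vee)$ carry over verbatim. For the reverse inclusion one uses that $g : G \thra K$, being a continuous surjection of compact Hausdorff groups, is open, so $G/\Ker(g) \to K$ is a topological isomorphism; hence any continuous $\psi \in G^\vee$ vanishing on $\Ker(g)$ descends to a \emph{continuous} character $\chi \in K^\vee$ with $\psi = g^\vee(\chi)$.

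The main obstacle is surjectivity of $f^\vee : G^\vee \thra H^\vee$ in (2): every \emph{continuous} character of $H$ must be extended to a continuous character of $G$. The divisibility argument used in (1) only produces an algebraic extension, which need not be continuous in this setting. I would invoke the standard extension theorem for continuous characters on closed subgroups of compact abelian groups available in \cite{HM13}, first observing that $f(H)$ is closed in $G$ as the continuous image of a compact space in a Hausdorff space, and that $f : H \to f(H)$ is a topological isomorphism since it is a continuous bijection from the compact group $H$ onto the Hausdorff subspace $f(H)$.
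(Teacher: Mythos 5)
Your proof is correct, but it takes a more hands-on route than the paper, which disposes of the lemma almost entirely by citation. For part (1) the two arguments are close in substance: the paper invokes divisibility of $\bbT$ in the form $\Ext(C,\bbT)=0$ (which packages your extension argument for surjectivity of $f^\vee$ together with exactness of the $\Hom$ sequence), and it gets the openness statements from the Open Mapping Theorem \cite[page 704, EA1.21]{HM13}, whereas you derive them directly from compactness (continuous injection from a compact space into a Hausdorff space is a homeomorphism onto its image; a continuous surjection of compact Hausdorff groups factors through the compact quotient by its kernel, hence is open) --- essentially the same content, proved rather than cited. The genuine divergence is in part (2): the paper delegates everything to Moskowitz's Theorem~2.1 \cite{Mo67} on dualizing proper exact sequences of locally compact abelian groups, the only work being to note that $f$ and $g$ are proper because $H$ and $G$ are compact (again via the Open Mapping Theorem). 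You instead verify exactness of $K^\vee \rat G^\vee \thra H^\vee$ by hand, correctly isolating the two points where topology matters: descending a character killing $\Ker(g)$ requires $G/\Ker(g)\to K$ to be a topological isomorphism (openness of $g$), and surjectivity of $f^\vee$ requires the character extension theorem for closed subgroups of compact abelian groups from \cite{HM13}, since divisibility alone only yields a discontinuous extension --- a pitfall you rightly flag. The paper's route is shorter and works verbatim in the locally compact setting, at the price of importing Moskowitz's machinery; yours is self-contained modulo the extension theorem and makes visible exactly which topological facts are used (just make sure the version of the extension theorem you cite is proved in \cite{HM13} independently of the exactness statement you are establishing, which it is).
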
 

\begin{proof} (1) is a consequence of Pontryagin duality and the fact
  that $A^\vee = \Hom(A,\bbT)$ with the compact--open topology and
  $\bbT$ is a divisible abelian group so that $\Ext(C,\bbT)=0$. The
  claim that $g^\vee, f^\vee$ are open maps onto their images follows
  from the Open Mapping Theorem \cite[page 704, EA1.21]{HM13}.

  (2) Here we use \cite[Theorem~2.1]{Mo67}. This theorem requires
  ``proper'' maps, i.e., continuous maps that are open maps onto their
  images. The maps $f$ and $g$ are proper because $H$ and $G$ are
  compact, so that the Open Mapping Theorem applies.
\end{proof}

We will also have to dualize direct sums of discrete groups and
products of compact groups.

\begin{lem}\label{duals of sums and products} \hfill
\begin{enumerate}
\item Let $A = \bigoplus_{i \in I} A_i$ be a direct sum of discrete
  abelian groups. Then $A^\vee = \prod_{i \in I} A_i^\vee$, the
  topology being the product topology. 
\item Let $G = \prod_{i \in I} G_i$ be the topological product of the
  compact groups $G_i$. Then $G^\vee = \bigoplus_{i \in I} G_i^\vee$ in
  the category of discrete abelian groups.
\end{enumerate} 
\end{lem}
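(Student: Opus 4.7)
The plan is to prove (1) by first establishing the algebraic isomorphism and then matching the two topologies, after which (2) will follow formally from (1) together with Pontryagin biduality.

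For the algebraic part of (1), I would invoke the universal property of the direct sum in the category of abelian groups: a homomorphism $\varphi\colon\bigoplus_{i\in I} A_i\to\bbT$ is determined by, and uniquely corresponds to, the family of restrictions $\varphi_i\defeq\varphi|_{A_i}\in A_i^\vee$. This yields a group isomorphism
\[
\Phi\colon A^\vee \to \prod_{i\in I} A_i^\vee,\qquad \varphi\mapsto(\varphi_i)_{i\in I}.
\]
No continuity issues arise here because everything in sight is discrete on the source side.

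For the topological part of (1), my approach is to identify both topologies with pointwise convergence on $A$. Since $A$ is discrete, the compact subsets of $A$ are exactly the finite subsets, so the compact--open topology on $A^\vee=\Hom(A,\bbT)$ agrees with the topology of pointwise convergence on $A$; in particular a neighborhood subbase at $0$ is given by the sets $\{\varphi:\varphi(a)\in U\}$ for $a\in A$ and $U$ an open neighborhood of $0$ in $\bbT$. On the other side, the product topology on $\prod_i A_i^\vee$ is by definition the topology of coordinatewise convergence, and each $A_i^\vee$ already carries the topology of pointwise convergence on $A_i$. Because every element $a\in A$ is a finite sum $a=\sum_{i\in F} a_i$ with $a_i\in A_i$ and $F$ finite, and since $\varphi(a)=\sum_{i\in F}\varphi_i(a_i)$, the condition ``$\varphi(a)$ lies near $0$ for all $a$ in a finite set'' and the condition ``finitely many $\varphi_i(a_i)$ lie near $0$'' generate the same filter of neighborhoods of $0$. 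Hence $\Phi$ is a homeomorphism.

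For (2), I would apply (1) to the discrete groups $\{G_i^\vee\}_{i\in I}$: setting $A\defeq\bigoplus_{i\in I} G_i^\vee$, part (1) gives a topological isomorphism $A^\vee\cong_t\prod_{i\in I}(G_i^\vee)^\vee$. Pontryagin biduality (valid in the compact Hausdorff setting) identifies $(G_i^\vee)^\vee\cong_t G_i$ and hence $A^\vee\cong_t\prod_{i\in I}G_i=G$. Dualizing once more and using biduality again yields $G^\vee\cong_t A^{\vee\vee}\cong A=\bigoplus_{i\in I} G_i^\vee$ as discrete groups, which is the desired conclusion.

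The main obstacle is the topological verification in (1); the algebraic identification is automatic from the universal property, but one must check honestly that character evaluation $\varphi\mapsto\varphi(a)$ at a general $a\in A$ only depends on finitely many coordinates $\varphi_i$, so that the compact--open subbasic neighborhoods really are captured by finite intersections of coordinate neighborhoods in the product topology. Everything else is either categorical bookkeeping or a direct appeal to Pontryagin duality.
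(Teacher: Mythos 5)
Your proof is correct. For part (2) you argue exactly as the paper does: apply (1) to the discrete duals $G_i^\vee$, invoke biduality $(G_i^\vee)^\vee \cong_t G_i$, and dualize once more to recover $G^\vee \cong \bigoplus_i G_i^\vee$. The only divergence is in part (1): the paper disposes of it with a citation to \cite[Proposition~1.17]{HM13}, whereas you supply the underlying argument yourself --- the universal property of the direct sum gives the algebraic isomorphism $\Phi$, and the observation that compact subsets of a discrete group are finite identifies the compact--open topology on $A^\vee$ with pointwise convergence, which you then match against the product topology. This buys self-containedness at the cost of one routine detail you gloss over: when translating a subbasic pointwise condition $\varphi(a)\in U$ with $a=\sum_{i\in F}a_i$ into coordinate conditions, you need to shrink to neighborhoods $U_i$ of the relevant values with $\sum_{i\in F}U_i\subseteq U$, using continuity of addition in $\bbT$ (equivalently, continuity of the finite sum of evaluation maps $\prod_{i\in F}A_i^\vee\to\bbT$); the two families of conditions are not literally the same sets, but they generate the same topology, and since $\Phi$ is a group isomorphism it suffices to compare neighborhood filters at $0$, as you do. With that detail made explicit your argument is a complete proof of the fact the paper merely quotes.
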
 

\begin{proof} (1) \cite[Proposition~1.17]{HM13}

  (2) By (1) $\left(\bigoplus_{i \in I} G_i^\vee\right)^\vee \cong
  \prod_{i \in I} G_i^{\vee\vee} \cong \prod_{i \in I} G_i$, hence
  dualizing again $\bigoplus_{i \in I} G_i^\vee \cong \left(\prod_{i
      \in I} G_i\right)^\vee$.
\end{proof}

\begin{lem} \label{dimension sum} If $0 \to G_1 \to G_2 \to G_3 \to
  0$ is an exact sequence of finite dimensional compact abelian
  groups, then $\dim{G_2} = \dim{G_1} + \dim{G_3}$.
\end{lem}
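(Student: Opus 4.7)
The plan is to transport the whole situation across Pontryagin duality, where dimension is detected by the rank of the rationalized dual, and then use flatness of $\Q$.

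First I would apply \lemref{duals of exact sequences}(2) to the given short exact sequence of compact Hausdorff groups, obtaining an exact sequence
\[
0 \lra G_3^\vee \lra G_2^\vee \lra G_1^\vee \lra 0
\]
of discrete abelian groups. Next I would tensor this sequence with $\Q$ over $\Z$; since $\Q$ is a flat $\Z$-module, the resulting sequence
\[
0 \lra \Q \otimes G_3^\vee \lra \Q \otimes G_2^\vee \lra \Q \otimes G_1^\vee \lra 0
\]
is an exact sequence of $\Q$-vector spaces, so dimensions add:
\[
\rk(\Q \otimes G_2^\vee) = \rk(\Q \otimes G_1^\vee) + \rk(\Q \otimes G_3^\vee).
\]

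To conclude, I would invoke the identity $\dim(G) = \rk(\Q \otimes G^\vee)$ for each of the three groups. This is stated in the excerpt (via \cite[Theorem~8.22]{HM13}) only for compact groups of positive dimension, but the zero--dimensional case is covered by \remref{0 dimension}: if $\dim(G)=0$ then $G^\vee$ is torsion and $\rk(\Q \otimes G^\vee)=0$, and conversely. Hence the formula holds unconditionally for finite--dimensional compact abelian groups, and substituting yields $\dim G_2 = \dim G_1 + \dim G_3$.

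The only subtlety is making sure the rank formula applies uniformly regardless of whether any of the $G_i$ happens to be zero--dimensional; this is handled by \remref{0 dimension} as above. Apart from that, the argument is routine once duality converts the sequence into one of discrete groups where $\Q \otimes -$ behaves well.
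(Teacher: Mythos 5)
Your proposal is correct and follows essentially the same route as the paper's own proof: dualize the sequence via \lemref{duals of exact sequences}(2), tensor with $\Q$ using its flatness, and then convert back to dimensions via $\dim(G)=\rk(\Q\otimes G^\vee)$, handling the zero--dimensional case with \remref{0 dimension}. The paper's proof treats the same points, including the explicit remark that the positive--dimensional case comes from \cite[Theorem~8.22]{HM13} and the zero--dimensional case from \remref{0 dimension}.
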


\begin{proof} The exactness of $0 \to G_1 \to G_2 \to G_3 \to 0$
  implies the exactness of $0 \to G_3^\vee \to G_2^\vee \to G_1^\vee
  \to 0$ and this implies the exactness of $0 \to \Q \otimes G_3^\vee
  \to \Q \otimes G_2^\vee \to \Q \otimes G_1^\vee \to 0$ as $\Q$ is
  torsion-free. But this is an exact sequence of $\Q$--vector spaces
  and hence $\dim_\Q (\Q \otimes G_2) = \dim_\Q (\Q \otimes G_3) +
  \dim_\Q (\Q \otimes G_1)$. This establishes the claim because, in
  general $\dim G = \dim_\Q \Q \otimes G^\vee$ by
  \cite[Theorem~8.22]{HM13} for $\dim G \geq 1$ and for $\dim G = 0$
  by \remref{0 dimension}.
\end{proof} 

\section{Totally factored compact Groups} 

A topological group $G$ {\dbf factors} or {\dbf is factorable} if $G
\cong_t \prod_{i} G_i$ with nonzero factors $G_i$ that are
topological groups. If $G$ cannot be factored, then it is {\dbf
  non--factorable}. Connected compact groups of dimension $1$ are
non-factorable by \lemref{dimension sum}. Connected compact groups of
dimension $1$ will be called {\dbf solenoids}.

\begin{prop}\label{solenoids and rational groups}
  The solenoids are exactly the duals of torsion-free groups of
  rank~$1$. In particular, $\Z^\vee = \bbT$, $\bbT^\vee = \Z$. 
\end{prop}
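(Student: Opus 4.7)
The plan is to convert the topological/dimensional characterisation of a solenoid into a purely algebraic condition on its Pontryagin dual. By the definition just given, a solenoid is a connected compact abelian group $G$ with $\dim G = 1$. The formula $\dim G = \rk(\Q \otimes G^\vee)$ recorded in Section~2 (from \cite[Theorem~8.22]{HM13}) translates the condition $\dim G = 1$ into $\rk(\Q \otimes G^\vee) = 1$, and a standard Pontryagin-duality dictionary entry (e.g.\ \cite[Corollary~8.5]{HM13}, used already in \remref{0 dimension}) says that connectedness of a compact abelian group $G$ is equivalent to torsion-freeness of $G^\vee$. Combining these two items is the core of the argument.

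First I would spell out both directions. If $G$ is a solenoid, then $G^\vee$ is torsion-free (by connectedness), and $\rk G^\vee = \rk(\Q \otimes G^\vee) = 1$ since $G^\vee$ is torsion-free. Conversely, if $A$ is a torsion-free abelian group of rank one, then $A^\vee$ is compact, connected (because $A$ is torsion-free), and $\dim A^\vee = \rk(\Q \otimes A) = 1$, hence a solenoid. Pontryagin reflexivity, $G \cong_t G^{\vee\vee}$, ensures that the two assignments $G \mapsto G^\vee$ and $A \mapsto A^\vee$ are mutually inverse on the respective classes, giving the claimed bijection ``up to isomorphism.''

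For the ``in particular'' clause, I would compute $\Z^\vee$ directly: the evaluation map $\Hom(\Z,\bbT) \to \bbT$, $f \mapsto f(1)$, is a bijective homomorphism, and a routine check shows it is a topological isomorphism for the compact--open topology on $\Hom(\Z,\bbT)$. Dualising once more and invoking reflexivity gives $\bbT^\vee \cong_t \Z^{\vee\vee} \cong_t \Z$. Both are consistent with the general statement, since $\Z$ is torsion-free of rank one.

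The main obstacle is essentially only bookkeeping: making sure the connected/torsion-free dictionary is cited in the right form and that the dimension formula is applicable (which, for $\dim G = 1 \geq 1$, it is, by the hypothesis on \cite[Theorem~8.22]{HM13}). No genuinely new argument is needed beyond the duality machinery already set up.
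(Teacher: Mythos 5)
Your argument is correct and is essentially the paper's own proof: the paper simply cites \cite[Corollary~8.5]{HM13} (connectedness of $G$ $\Leftrightarrow$ $G^\vee$ torsion-free) and \cite[Theorem~8.22]{HM13} (the dimension--rank formula), which are exactly the two ingredients you combine, with the evaluation-map computation of $\Z^\vee=\bbT$ being a standard detail the paper leaves implicit.
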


\begin{proof}
  \cite[Corollary~8.5]{HM13}, \cite[Theorem~8.22]{HM13}.
\end{proof}

Torsion-free groups of rank~$1$ are well--understood and can be
classified by means of ``types''. Completely decomposable groups are
direct sums of rank--$1$ groups and, again, are well--understood and
classified up to isomorphism by cardinal invariants,
\cite[Chapter~12.1, Theorem~1.1, Theorem~3.5]{F15}. A connected
compact group is {\dbf totally factored} if it is the topological
product of solenoids. There is a {t}--isomorphism criterion,
\thmref{totally factored}, for totally factored connected compact
groups. These are concepts of the category of compact groups but they
will follow by duality from well--known results in abelian groups.
\medskip

We now review the explicit description of solenoids given in \cite[\S
10]{HR63}, in particular see \cite[(10.12),(10.13)]{HR63}. Let
\[
\mathfrak{a}=(a_0, a_1, a_2, \ldots )\quad \text{where}\quad 2 \leq
a_i \in \Z,
\]
and let $\mathfrak{A}$ be the set of all sequences $\mathfrak{a}$. The
set of {\dbf $\mathfrak{a}$--adic integers} is
\[\textstyle
\Delta_\mathfrak{a} \overset{\text{def}} =\, \prod_{k=0}^\infty
\left\{0,1,\ldots ,a_{k-1} \right\}
\] 
With suitable definition of addition and topology
$\Delta_\mathfrak{a}$ is a compact Hausdorff topological group with
additive identity $\mathbf{0}_\mathfrak{a} = (0,0,0,\ldots)$. Let
$\mathbf{1}_\mathfrak{a} = (1,0,0,\ldots) \in \Delta_\mathfrak{a}$.
(Actually $\Delta_\mathfrak{a}$ can be made into a commutative ring
with $\mathbf{1}_\mathfrak{a}$ its identity.) Define the {\dbf
  $\mathfrak{a}$--adic solenoid}
  \[
  \Sigma_\mathfrak{a} \overset{\rm def}{=} \,\frac{\Delta_\mathfrak{a}
    \times \mathbb{R}}{\Z(\mathbf{1}_\mathfrak{a},1)}
  \] 
  where $\Z(\mathbf{1}_\mathfrak{a},1)$ is the free cyclic subgroup of
  $\Delta_\mathfrak{a} \times \Reals$ additively generated by
  $(\mathbf{1}_\mathfrak{a},1)$. Then $\Sigma_\mathfrak{a}$ is a
  one-dimensional compact connected Hausdorff group,
  \cite[(10.13)]{HR63}. For $\mathfrak{a} \in \mathfrak{A}$ let
  \[
  A_\mathfrak{a} = \left\{\frac{m}{a_0 a_1\cdots a_k}: 0 \le k \in \Z,
    m \in \Z \right\}.
  \]
  Evidently $A_\mathfrak{a}$ is an additive subgroup of $\Q$
  containing $1$, a so-called {\dbf rational group}.

  For $\mathfrak{a} \in \mathbb{A}$, the Pontryagin dual of
  $\Sigma_\mathfrak{a}$ is discrete and isomorphic to
  $A_\mathfrak{a}$, \cite[(25.3)]{HR63}.

\begin{thm}\label{rank one duals} The solenoids $\Sigma_\mathfrak{a}$
  are exactly the duals of the discrete rank--$1$ torsion-free groups
  not isomorphic with $\Z$ while $\bbT^\vee \cong \Z$.
\end{thm}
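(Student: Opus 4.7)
The plan is to reduce to a statement about rank-$1$ torsion-free abelian groups via Pontryagin duality: by \propref{solenoids and rational groups} the dual $G \mapsto G^\vee$ bijects the isomorphism classes of solenoids with those of rank-$1$ torsion-free groups, and the paragraph preceding the theorem already records $\Sigma_\mathfrak{a}^\vee \cong A_\mathfrak{a} \subseteq \Q$ as well as $\bbT^\vee \cong \Z$. It therefore suffices to prove (i) $A_\mathfrak{a} \not\cong \Z$ for every $\mathfrak{a} \in \mathfrak{A}$, and (ii) every rank-$1$ torsion-free group $A$ not isomorphic to $\Z$ is isomorphic to some $A_\mathfrak{b}$.

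Part (i) is quick: choose any prime $p \mid a_0$. Then $1/a_0 \in A_\mathfrak{a}$ exhibits the $p$-height of $1 \in A_\mathfrak{a}$ as strictly positive, whereas every nonzero element of $\Z$ has $p$-height $0$; hence $A_\mathfrak{a} \not\cong \Z$.

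For (ii), I would embed $A$ in $\Q$ with $1 \in A$ and set $D = \{d \in \Z_{\geq 1} : 1/d \in A\}$. A short Bezout argument applied to reduced fractions $p/q \in A$ shows $A = \{m/d : m \in \Z,\ d \in D\}$ and that $D$ is closed under divisors and under $\lcm$. If $D$ is finite with $N = \max D$, then $\lcm$-closure gives $D = \{d : d \mid N\}$ and so $A = \tfrac{1}{N}\Z \cong \Z$; since $A \not\cong \Z$, the set $D$ is infinite. The next step is to build inductively a chain $c_0 \mid c_1 \mid c_2 \mid \cdots$ in $D$ with $c_0 \geq 2$, strict increase $c_{k+1} > c_k$, and covering $D$ in the sense that each $e \in D$ divides some $c_k$. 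Setting $a_0 = c_0$ and $a_k = c_k / c_{k-1}$ then gives $a_0 \cdots a_k = c_k$ and $a_k \geq 2$ (since $c_{k+1}/c_k$ is an integer greater than $1$); the descriptions $A_\mathfrak{a} = \bigcup_k \tfrac{1}{c_k}\Z$ and $A = \{m/d : d \in D\}$ coincide because the $c_k$ exhaust $D$ up to divisibility.

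The hard part will be that chain construction: the obvious candidate $c_k = \lcm(e_0, \ldots, e_k)$ (after enumerating $D = \{e_0, e_1, \ldots\}$) covers $D$ but can stall, forcing $c_{k+1}/c_k = 1$ and violating $a_{k+1} \geq 2$. The fix, whenever the naive step yields $e_{k+1} \mid c_k$, is to replace $c_{k+1}$ by $\lcm(c_k, d)$ for some $d \in D$ not dividing $c_k$ — such $d$ exists because $D$ is infinite while only finitely many of its elements divide $c_k$. Strict increase then forces $c_{k+1}/c_k \geq 2$, and coverage persists because $e_k \mid c_k$ by construction.
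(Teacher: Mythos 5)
Your overall reduction is the same as the paper's: both proofs rest on $\Sigma_\mathfrak{a}^\vee \cong A_\mathfrak{a}$ from \cite[(25.3)]{HR63} and then show that every rational group not isomorphic to $\Z$ is of the form $A_\mathfrak{a}$. Your step (ii) does this by a genuinely different construction and it is correct: extracting the denominator set $D$, checking divisor- and lcm-closure via Bezout, ruling out finite $D$, and building a strictly increasing divisor chain $c_0\mid c_1\mid\cdots$ cofinal in $D$ (your ``stall'' fix, choosing $d\in D$ with $d\nmid c_k$, is exactly what is needed, and it also handles $c_0\geq 2$). The paper instead writes $A=\langle \frac{1}{p^{h_p}}\mid p\in\Primes\rangle$ and defines $\mathfrak{a}$ explicitly by the interleaving formula \eqref{special a's}; that explicit normal form is reused afterwards (the type of a solenoid, \corref{special a's and type}), which your abstract chain does not provide, but as a proof of the theorem itself your route is equally valid and somewhat more self-contained.

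Step (i), however, is wrong as written. It is false that every nonzero element of $\Z$ has $p$-height $0$ (the $p$-height of $p^k$ is $k$), and the inference ``the $p$-height of $1$ is positive, hence $A_\mathfrak{a}\not\cong\Z$'' is invalid, because an isomorphism need not send $1$ to a generator: the rational group $\tfrac{1}{2}\Z$ contains $\tfrac12$, so the $2$-height of $1$ in it is positive, yet $\tfrac{1}{2}\Z\cong\Z$. A single positive height is not an obstruction; what you must use is that \emph{all} the conditions $a_i\geq 2$ force unbounded divisibility: $1$ is divisible in $A_\mathfrak{a}$ by $a_0a_1\cdots a_k\geq 2^{k+1}$ for every $k$, so its image under any injection into $\Z$ would be a nonzero integer divisible by arbitrarily large integers, which is impossible; equivalently, $A_\mathfrak{a}$ contains elements of unbounded denominator and hence is not finitely generated. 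With that one-line repair (the paper leaves this point implicit), your proof is complete.
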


\begin{proof} By \cite[(25.3)]{HR63} the dual of $\Sigma_\mathfrak{a}$
  where $\mathfrak{a} = (a_0, a_1, \ldots)$ is isomorphic with the
  rational group $A_\mathfrak{a}$. It is left to show that every
  rank--$1$ torsion-free group not isomorphic with $\Z$ is obtained as
  $A_\mathfrak{a}$ for some $\mathfrak{a}$.  Any rank--$1$
  torsion-free group is isomomorphic with a rational group. Therefore
  it suffices to consider a given additive subgroup $A$ of $\Q$
  containing $1$. Let $A$ be such a rational group. It is well--known
  and easy to see (by the partial fraction decomposition of rationals)
  that $A = \langle \frac{1}{p^{h_p}} \mid p \in \Primes\rangle$ where
  $0 \leq h_p \leq \infty$. Here $h_p = p^\infty$ means that all
  fractions $\frac{1}{p^n}$ are included among the generators. Let
  $\Primes_n = \{p \mid 1 \leq h_p < \infty\} = \{p_1, p_2, \ldots\}$
  and let $\Primes_\infty = \{p \mid h_p=\infty\} = \{q_1, q_2,
  \ldots\}$.  Both sets may be empty, finite or infinite. We now set
  \begin{equation}\label{special a's}
  a_0 = p_1^{h_1} q_1,\  a_1 = p_2^{h_2} q_1 q_2,\  a_2 = p_3^{h_3} q_1
  q_2 q_3, \ldots
  \end{equation}
  Then $\mathfrak{a} = (a_0, a_1, a_2, \ldots) \in \mathfrak{A}$ if
  either $\Primes_n$ is infinite or $\Primes_\infty \neq \emptyset$.
  It is evident that
  \[\textstyle
  \left\{\frac{m}{a_0 a_1\cdots a_k}: 0 \le k \in \Z, m \in \Z
  \right\} = \left\langle \frac{1}{p^{h_p}},\frac{1}{q^{\infty}} \mid
    p \in \Primes_n, q \in \Primes_\infty \right\rangle.
  \]
  Note that $\Primes_n$ finite and $\Primes_\infty = \emptyset$ means
  that $A = \langle \frac{1}{p^{h_p}} \mid p \in \Primes\rangle \cong
  \Z$. In this case it is well--known that $\bbT^\vee \cong \Z$.
\end{proof}  

In the proof of \thmref{rank one duals} we saw that a solenoid $\Sigma
\not\cong \bbT$ could be obtained in the form $\Sigma_\mathfrak{h}$ for
the special sequence $\mathfrak{h}$ given in \eqref{special a's}. These
sequences in turn were obtained from $\Sigma^\vee \cong \langle
\frac{1}{p^{h_p}} \mid p \in \Primes\rangle$. The sequence $(h_2, h_3,
h_5, \ldots)$ is the {\dbf height sequence} associated with the
rational group $\Sigma^\vee$. The height sequence of $\bbT^\vee = \Z$
is $\mathfrak{z} = (0,0,0,\ldots)$. Two height sequences are {\dbf
  equivalent} if their entries agree except for finitely many entries
of finite value. The equivalence class of height sequences are {\dbf
 types} and two torsion-free rank--$1$ discrete groups are isomorphic
if and only if they have equal types, \cite[Chapter~12.1,
Theorem~1.1]{F15}. 

Going forward we will assume that any solenoid is given in the form
$\Sigma_\mathfrak{h}$ where $\mathfrak{h}$ arises from the type
$[(h_2,h_3,h_5,\ldots)]$ while $\Sigma_\mathfrak{z} = \bbT$ by
definition.

Let $\Sigma$ be a solenoid. Then the {\dbf type} of $\Sigma$ is
$\tp(\Sigma) \overset{\rm def}{=} [(h_2,h_3,h_5,\ldots)]$ if $\Sigma
\cong \Sigma_\mathfrak{h}$. Thus for a solenoid $\Sigma$ and a
rank--$1$ torsion-free group $A$ we have that $\Sigma^\vee \cong A$ if
and only if $\Sigma \cong A^\vee$ if and only if $\tp(\Sigma) =
\tp(A)$.

The set of types is partially ordered and for types
$\sigma, \tau$ we have $\sigma \leq \tau$ if and only if for rational
groups $A$ of type $\sigma$ and $B$ of type $\tau$ it is true that
$\Hom(A,B) \neq 0$, \cite[Chapter~12, Lemma~1.4]{F15}.

\begin{cor}\label{special a's and type} \hfill 
\begin{enumerate}
\item The set of isomorphism classes of solenoids is in bijective
  correspondence with the partially ordered set $\mathfrak{T}$ of
  types.
\item Given a type $\tau =[(h_2, h_3, h_5, \ldots)] \neq
  [(0,0,0,\ldots)]$ the corresponding solenoid is
  $\Sigma_\mathfrak{h}$ where $\mathfrak{h}$ is given by
  \eqref{special a's}.
\item Two solenoids $\Sigma_1, \Sigma_2$ are isomorphic as topological
  groups if and only if ${\operatorname{cHom}}(\Sigma_1,\Sigma_2) \neq 0 \neq
  {\operatorname{cHom}}(\Sigma_2,\Sigma_1)$.
\end{enumerate} 
\end{cor}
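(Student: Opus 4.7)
The plan is to obtain all three items as direct consequences of Pontryagin duality applied to the classical structure theory of rank--$1$ torsion-free abelian groups.

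For (1), I would combine \thmref{rank one duals} with \cite[Chapter~12.1, Theorem~1.1]{F15}: the former yields a bijection between topological isomorphism classes of solenoids and algebraic isomorphism classes of rank--$1$ torsion-free discrete groups (with $\Z$ corresponding to $\bbT$), while the latter classifies those discrete groups up to isomorphism by their types. Composing these two bijections gives the required correspondence with $\mathfrak{T}$, and the partial order carries over verbatim.

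For (2), this amounts to reading off what was proved inside \thmref{rank one duals}. Given a nonzero type $\tau = [(h_2, h_3, h_5, \ldots)]$, the rational group $A_\mathfrak{h} = \langle \frac{1}{p^{h_p}} \mid p \in \Primes \rangle$ has type $\tau$, and by \cite[(25.3)]{HR63} one has $\Sigma_\mathfrak{h}^\vee \cong A_\mathfrak{h}$. Thus $\tp(\Sigma_\mathfrak{h}) = \tau$, and by part (1) $\Sigma_\mathfrak{h}$ is the unique solenoid of type $\tau$ up to topological isomorphism.

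For (3), the forward implication is trivial, since a topological isomorphism and its inverse supply nonzero continuous homomorphisms in both directions. For the converse, set $A_i = \Sigma_i^\vee$. Pontryagin duality is a contravariant category equivalence between compact and discrete abelian groups, so $\cHom(\Sigma_i, \Sigma_j) \cong \Hom(A_j, A_i)$ (continuity is automatic between discrete groups). The hypothesis thereby translates to $\Hom(A_1, A_2) \neq 0 \neq \Hom(A_2, A_1)$, which by \cite[Chapter~12, Lemma~1.4]{F15} forces $\tp(A_1) \leq \tp(A_2)$ and $\tp(A_2) \leq \tp(A_1)$, hence $\tp(A_1) = \tp(A_2)$. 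Part (1) then gives $\Sigma_1 \cong_t \Sigma_2$.

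No step here is technically deep; the only item requiring genuine care is the contravariance bookkeeping in (3), where one must check that the direction reversal from dualizing matches the symmetry of the hypothesis so that both Hom--nonvanishing statements are actually used. Once this is laid out correctly, the corollary is a transparent translation of Fuchs' rank--$1$ classification across Pontryagin duality.
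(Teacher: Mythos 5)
Your proposal is correct and follows essentially the same route as the paper: (1) and (2) are read off from \thmref{rank one duals} together with Fuchs' rank--$1$ classification by types, and (3) is proved by dualizing the Hom--hypothesis and invoking the Hom--versus--type comparison \cite[Chapter~12, Lemma~1.4]{F15} to force equality of types. Your write-up is merely more explicit about the contravariance bookkeeping than the paper's terse argument, but there is no substantive difference.
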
 

\begin{proof} (1) and (2) just restate the conclusions reached
  preceding \corref{special a's and type}.  

  (3) ${\operatorname{cHom}}(\Sigma_1,\Sigma_2) \neq 0 \neq {\operatorname{cHom}}(\Sigma_2,\Sigma_1)$
  is equivalent to $\Hom(\Sigma_2^\vee,\Sigma_1^\vee) \neq 0 \neq
  \Hom(\Sigma_1^\vee,\Sigma_2^\vee)$. This implies that
  $\tp(\Sigma_1) = \tp(\Sigma_1^\vee) = \tp(\Sigma_2^\vee) =
  \tp(\Sigma_2)$ and hence $\Sigma_1 \cong_t \Sigma_2$. 
\end{proof} 

The duals of totally factored compact groups are the {\dbf completely
  decomposable groups} of discrete abelian group theory. Completely
decomposable groups can be classified up to isomorphism by cardinal
invariants, \cite[Theorem~3.5]{F15}. We just state a rather
unsatisfactory theorem leaving open the dualizing of the fine points
(type subgroups, types of elements, homogeneous decomposition,
\cite[Chapter~12.3]{F15}) of the theory of totally factored compact
groups. For example, the rank counts the direct summands of a
completely decomposable group hence the dimension counts the factors
in a totally factored compact group and it should be possible to see
this without resorting to duality theory.

\begin{thm}\label{totally factored} A totally factored compact group
  is determined up to isomorphism by types and cardinal invariants.
\end{thm}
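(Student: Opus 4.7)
The plan is to transfer the classical classification of completely decomposable torsion--free abelian groups across Pontryagin duality. Suppose $G \cong_t \prod_{i \in I} \Sigma_i$ is a totally factored compact group, with each $\Sigma_i$ a solenoid. By \lemref{duals of sums and products}(2), the discrete dual is
\[
G^\vee \;\cong\; \bigoplus_{i \in I} \Sigma_i^\vee,
\]
and by \propref{solenoids and rational groups} each $\Sigma_i^\vee$ is a torsion--free group of rank $1$. Hence $G^\vee$ is a completely decomposable discrete abelian group.

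Next I would invoke the classical structure theorem for completely decomposable groups (\cite[Chapter~12, Theorem~3.5]{F15}): the isomorphism class of $G^\vee$ is determined by the cardinal invariants $r_\tau(G^\vee)$, $\tau \in \mathfrak{T}$, where $r_\tau$ counts the number of rank--$1$ summands of type $\tau$ in any decomposition. Using \corref{special a's and type}(1), the type of a rank--$1$ summand $\Sigma_i^\vee$ agrees with the type of the corresponding solenoid $\Sigma_i$, so these cardinal invariants can equivalently be read off directly from any totally factored decomposition of $G$ as the number of factors of each type. Define $r_\tau(G) := r_\tau(G^\vee)$ for every type $\tau$; this is the system of cardinal invariants attached to $G$.

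It remains to argue both directions of the classification. If two totally factored compact groups $G, H$ have the same family of invariants $\{r_\tau\}_{\tau \in \mathfrak{T}}$, then $G^\vee \cong H^\vee$ as discrete groups by Fuchs's theorem, and therefore $G \cong_t H$ by Pontryagin duality (the category equivalence cited in the Introduction). Conversely, if $G \cong_t H$ then $G^\vee \cong H^\vee$, and Fuchs's theorem forces agreement of the invariants.

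The main (minor) obstacle is ensuring that the cardinal invariants are well defined independently of the chosen factorization of $G$: this is precisely the content of the uniqueness half of \cite[Chapter~12, Theorem~3.5]{F15} applied to $G^\vee$, and it is the reason I route through duality rather than attempt a direct argument inside the category of compact groups. As the authors note, a ``native'' compact--group proof of the invariance (say, interpreting cardinal invariants in terms of dimension or type--indexed closed subgroups without resorting to duality) would be desirable, but it is not required for the bare statement given here.
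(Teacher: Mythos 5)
Your argument is correct and is exactly the route the paper intends: the paper offers no formal proof beyond noting that the statement is the dual, via Pontryagin duality and \lemref{duals of sums and products}, of the classification of completely decomposable groups by types and cardinal invariants in \cite[Theorem~3.5]{F15}. You have simply spelled out the details (including the well-definedness of the invariants), which matches the paper's approach.
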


On the basis of the known results we consider totally factored compact
groups to be well understood. 

\section{The Main Factorization $G = K \times T$} 

The comprehensive treatise \cite{HM13} contains the following result.

\begin{thm}\label{HM decomposition} Let $G$ be a compact group such
  that $G^\vee$ is torsion-free. Then $G$ is metric and $G \cong_t T
  \times K$ where $K$ is torus--free and $T$ is a characteristic
  maximal torus. In particular, $\Hom(\bbT,K)=0$.
\end{thm}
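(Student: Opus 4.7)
The plan is to transfer the entire statement to the discrete side via Pontryagin duality, prove the corresponding structure theorem for torsion-free abelian groups there, and then dualize back. Set $A = G^\vee$; by hypothesis $A$ is discrete torsion-free. In the (implicitly finite--dimensional) setting of \cite{HM13}, $A$ has finite rank and hence is countable, which makes $G$ second countable and thus metrizable. The metric assertion is therefore essentially free from duality.

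The substantive content is the following algebraic fact, which I would establish on the discrete side: \emph{every finite-rank torsion-free abelian group $A$ admits a direct decomposition $A = F \oplus B$ with $F$ free abelian and $\Hom(B, \Z) = 0$}. To produce it, I pick $F$ to be a free abelian direct summand of $A$ of largest possible rank (such a maximum exists because $\rk F \leq \rk A < \infty$) and let $B$ be any complement. If $\Hom(B, \Z)$ were nonzero, then since $\Z$ is projective a copy of $\Z$ would split off from $B$, contradicting the maximality of $\rk F$.

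Now dualize via \lemref{duals of sums and products}(1) to obtain $G \cong_{t} F^\vee \times B^\vee$. Set $T = F^\vee$, so $T \cong_{t} \bbT^n$ with $n = \rk F$, and set $K = B^\vee$. The chain of identifications $\cHom(\bbT, K) = \Hom(K^\vee, \Z) = \Hom(B, \Z) = 0$ shows simultaneously that $K$ is torus-free and that the final clause $\Hom(\bbT, K) = 0$ holds (interpreting $\Hom$ in the topological sense operative when both arguments are topological groups).

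The step I expect to be the principal obstacle is the claim that $T$ is \emph{characteristic}. On the dual side, $B$ is intrinsic, since
\[
B = \bigcap_{\phi \in \Hom(A, \Z)} \ker \phi,
\]
so every $\alpha \in \Aut(A)$ preserves $B$; dually $K$ is a characteristic subgroup of $G$. The complementary free subgroup $F$, by contrast, is determined only up to the action of $\Hom(F, B)$ on lifts, so extracting a canonical torus requires additional intrinsic data. A natural route is to define $T$ via the Lie algebra: the inclusion $\mathfrak{L}(K) \hookrightarrow \mathfrak{L}(G)$ is characteristic by functoriality of $\mathfrak{L}$, one takes a canonical complement in $\mathfrak{L}(G)$, and lets $T$ be the closure of the image of this complement under the exponential. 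Showing that this produces a torus complement of $K$ fixed by every t-automorphism of $G$ is the technical heart of the theorem and is where most of the work in a full write-up would go.
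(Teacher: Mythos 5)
You should know at the outset that the paper does not prove this theorem at all: it simply cites \cite[Theorem~8.45]{HM13} and \cite[Corollary~8.47]{HM13}. So the question is whether your argument stands on its own, and it has two genuine gaps. First, the statement as quoted carries no finite-dimensionality hypothesis, so your opening reduction (``$A$ has finite rank, hence countable, hence $G$ metrizable'') is borrowed rather than available; your maximal-rank-free-summand argument is correct for finite rank, but in the general (countable) case the discrete-side fact you need is Stein's theorem, which is substantially harder, and for arbitrary torsion-free duals the decomposition and even the ``metric'' conclusion fail (consider $\bbT^{\mathfrak c}$, whose dual $\Z^{(\mathfrak c)}$ is torsion-free). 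So either you must prove Stein's theorem for countable torsion-free groups, or you must state explicitly that you are working under the finite-dimensionality standing assumption of the paper -- you cannot get the metrizability claim out of torsion-freeness alone.

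Second, and principally: you do not prove that $T$ is a characteristic maximal torus -- you explicitly defer it -- and the route you sketch would not work. There is no canonical vector-space complement to $\mathfrak{L}(K)$ in $\mathfrak{L}(G)$; choosing one is exactly as non-canonical as choosing the splitting itself, and the closure of the exponential image of a complement need not be a torus meeting $K$ trivially. The missing step actually has a short direct proof from what you already have. Let $S \leq G$ be any torus subgroup and let $\pi_K \colon G \to K$ be the projection along $T$. Then $\pi_K(S)$ is a continuous homomorphic image of a torus, i.e.\ a quotient of some $\bbT^n$ by a closed subgroup, hence a compact connected abelian Lie group, hence again a torus; if it were nontrivial it would contain a copy of $\bbT$, contradicting $\cHom(\bbT,K)=0$. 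Thus $\pi_K(S)=0$ and $S \subseteq T$, so $T$ is the unique maximal torus subgroup of $G$; since any continuous endomorphism of $G$ carries torus subgroups to torus subgroups, $T$ is fully invariant and in particular characteristic. With that paragraph inserted, and with the rank/metrizability issue addressed as above, your duality-based proof is complete in the finite-dimensional setting the paper actually uses, and it is in spirit the same mechanism (splitting $A = F \oplus B$ with $F$ free and $\Hom(B,\Z)=0$, then dualizing) that underlies the result quoted from Hofmann--Morris.
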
 

\begin{proof} \cite[Theorem~8.45]{HM13} and \cite[Corollary~8.47]{
    HM13}. 
\end{proof}

This result says that $G$ factors with a well--understood factor $T$
and an obscure factor $K$.
 
A compact group is {\dbf clipped} if it has no factor that is a
solenoid. A discrete torsion-free group is {\dbf clipped} if it has
no rank--$1$ summand. Clearly, a compact connected group $G$ is
clipped if and only if $G^\vee$ is clipped. We will establish the
following result.

\begin{thm}\label{main decomposition of compact groups} {\rm (Main
    Decomposition)} Let $G$ be a compact connected Hausdorff group of
  finite dimension. Then $G$ has a decomposition $G = K \times T$ such
  that $T$ is totally factored and $K$ is clipped. In particular,
  ${\operatorname{cHom}}(\bbT,K) = 0$ and ${\operatorname{cHom}}(G,\Q^\vee) = 0$. 
\end{thm}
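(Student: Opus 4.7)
The plan is to push the Main Decomposition of torsion-free finite rank abelian groups, cited as \cite[Theorem~2.5]{MS18}, through Pontryagin duality. Since $G$ is compact, connected, and of finite dimension, its dual $A := G^\vee$ is a discrete torsion-free abelian group of finite rank: torsion-freeness comes from \cite[Corollary~8.5]{HM13} (see also \remref{0 dimension}), and $\rk A = \dim G < \infty$ via \cite[Theorem~8.22]{HM13}. Thus $A$ falls squarely under \cite[Theorem~2.5]{MS18}.

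I would then apply that theorem to write $A = C \oplus B$, where $C$ is completely decomposable (a finite direct sum of torsion-free rank-one groups) and $B$ admits no rank-one direct summand. Dualizing via \lemref{duals of sums and products}(1) yields $G \cong_t C^\vee \times B^\vee$ in the topological category, and I would set $T := C^\vee$, $K := B^\vee$. By \propref{solenoids and rational groups}, each rank-one summand of $C$ dualizes to a solenoid, so $T$ is a finite topological product of solenoids, i.e.\ totally factored. Clippedness of $K$ follows by contradiction: a solenoid factor $\Sigma$ of $K$ would dualize, using \lemref{duals of sums and products}(2), into a rank-one summand $\Sigma^\vee$ of $B$, contrary to its construction.

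For the two auxiliary vanishings I would translate each $\operatorname{cHom}$ back to a $\Hom$ of discrete groups. Using $\bbT^\vee = \Z$, duality gives $\operatorname{cHom}(\bbT, K) \cong \Hom(B, \Z)$; a nonzero map $B \to \Z$ would have image isomorphic to $\Z$, which splits off by projectivity of $\Z$, producing the forbidden rank-one summand of $B$. The vanishing $\operatorname{cHom}(K, \Q^\vee) \cong \Hom(\Q, B) = 0$ is obtained the same way: a nonzero $\Q \to B$ has divisible image, which splits off because $\Q$ is injective, again yielding a rank-one summand of $B$. (I read the printed $\operatorname{cHom}(G, \Q^\vee) = 0$ as a typo for $\operatorname{cHom}(K, \Q^\vee) = 0$, since $T$ may legitimately contain a $\Q^\vee$-solenoid summand.) The only nontrivial input is \cite[Theorem~2.5]{MS18}; the rest is routine duality bookkeeping, and no real obstacle arises beyond aligning \lemref{duals of sums and products} so that the discrete direct sum $C \oplus B$ genuinely dualizes to a topological product.
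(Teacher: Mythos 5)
Your argument is correct and takes essentially the same route as the paper: dualize, apply \cite[Theorem~2.5]{MS18} to $G^\vee$, dualize the decomposition back, and obtain the two vanishing statements by duality --- exactly the content of the paper's \lemref{no free or Q summand}, which you reprove inline instead of citing. Your reading of $\operatorname{cHom}(G,\Q^\vee)=0$ as $\operatorname{cHom}(K,\Q^\vee)=0$ is the right one, since part (3) of that lemma applies only to clipped groups and the statement fails for $G$ itself whenever $T$ has a factor of type $\Q^\vee$.
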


The result in Hofmann \& Morris is more general in as much as it is
not assumed that $G$ is connected and finite dimensional,
\cite[Corollary~8.5]{HM13}. To illustrate the difference between the
two results we give an example.

\begin{ex} Let $G = \bbT^m \oplus K$ where $K$ is totally factored
but torus--free. Then $G = \bbT^m \oplus K$ is the claimed
decomposition of Hofmann \& Morris where nothing is known about $K$
except that it is torus--free. According to \thmref{main decomposition
  of compact groups} we have the decomposition $G = G \oplus 0$ in
which the obscure factor is trivial.
\end{ex}

\begin{proof} (of \thmref{main decomposition of compact groups}).  Let
  $A = G^\vee$ be the Pontryagin dual of $G$. Then $A$ is torsion-free
  of finite rank. By \cite[Theorem~2.5]{MS18} $A = A_0 \oplus A_1$
  where $A_0$ is completely decomposable and $A_1$ is clipped. Hence
  $G = H_0 \times H_1$ where $H_0 = A_0^\vee$ is totally factored
  and $H_1 = A_1^\vee$ is clipped. The remaining claims follow from
  \lemref{no free or Q summand}.
\end{proof} 

\begin{lem}\label{no free or Q summand} Let $G$ be compact,
  connected of finite dimension and let $A = G^\vee$. \hfill 
\begin{enumerate}
\item $G$ is torus--free if and only if ${\operatorname{cHom}}(\bbT,G)=0$, or,
  equivalently, $\Hom(A,\Z) = 0$.
\item $G$ has a quotient group topologically isomorphic to $\Q^\vee$
  if and only if $A$ has a summand isomorphic to $\Q$.
\item If $G$ is clipped, then ${\operatorname{cHom}}(G,\Q^\vee) = 0$. 
\end{enumerate} 
\end{lem}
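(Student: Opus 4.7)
The plan is to reduce all three claims to their Pontryagin-dual analogues, working with the discrete torsion-free finite-rank group $A = G^\vee$. The two recurring tools are the duality identities $\cHom(\bbT, G) \cong \Hom(A, \Z)$ and $\cHom(G, \Q^\vee) \cong \Hom(\Q, A)$ (using $\bbT^\vee = \Z$ and $\Q^{\vee\vee} = \Q$), together with the fact that topological direct factor decompositions of $G$ correspond bijectively to direct sum decompositions of $A$ via \lemref{duals of sums and products}.

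For (1), a topological decomposition $G \cong_{t} G' \times \bbT$ is equivalent to an algebraic decomposition $A \cong (G')^\vee \oplus \Z$, so ``torus-free'' is equivalent to ``$A$ has no $\Z$-summand''. The content is therefore: a nonzero homomorphism $\varphi: A \to \Z$ must already split off a copy of $\Z$ from $A$. This is immediate because $\Im \varphi = n\Z \cong \Z$ is free, so the short exact sequence $0 \to \ker\varphi \to A \to \Im\varphi \to 0$ splits; the converse implication is trivial via projection onto the summand. Combined with the duality identity this yields the stated three-way equivalence.

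For (2) and (3), the pivotal observation is that an injection $\Q \hookrightarrow A$ automatically produces a summand $\Q$ of $A$, since $\Q$ is divisible and hence injective in the category of abelian groups. Dualizing via \lemref{duals of exact sequences}, a continuous surjection $G \thra \Q^\vee$ corresponds to an injection $\Q \hookrightarrow A$, which by the preceding remark is the same data as a summand decomposition $A = \Q \oplus A''$; this gives (2). For (3), any nonzero $\psi: \Q \to A$ must in fact be injective, because its image is a torsion-free quotient of $\Q$ and the only nonzero such quotient is $\Q$ itself; the divisibility argument then produces a rank-$1$ summand of $A$, contradicting that $G$, and hence $A$, is clipped. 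The main obstacle is purely bookkeeping: keeping careful track of whether splittings are meant in the topological or the purely algebraic category, and invoking the correct part of \lemref{duals of exact sequences} when passing between them. Once the duality identifications are in place, everything reduces to the elementary facts that $\Z$ is free and $\Q$ is divisible.
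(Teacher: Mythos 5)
Your argument is correct and follows the paper's own duality-based route: all three parts are reduced to statements about $A = G^\vee$, and part (3) rests, exactly as in the paper, on the facts that a nonzero torsion-free quotient of $\Q$ is again $\Q$ and that $\Q$, being divisible, splits off as a summand of $A$, contradicting clippedness. The only differences are cosmetic: you prove (1) elementarily (the image of a nonzero map $A \to \Z$ is free, so it splits) where the paper cites Hofmann--Morris, you spell out (2) where the paper says ``evident,'' and in (3) you dualize $f \in \cHom(G,\Q^\vee)$ directly via $\cHom(G,\Q^\vee) \cong \Hom(\Q,A)$, whereas the paper first corestricts $f$ to its image and invokes properness and the Open Mapping Theorem before dualizing.
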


\begin{proof} (1) \cite[Corollary~8.47]{HM13}.

  (2) Evident. 

  (3) Note first that $A$ is clipped together with $G$. By way of
  contradiction assume that ${\operatorname{cHom}}(G,\Q^\vee) \ne 0$. Let $0 \ne f \in
  {\operatorname{cHom}}(G,\Q^\vee)$. Then $f$ is proper in the sense of Moskowitz by
  the Open Mapping Theorem as $G$ is compact. Hence $g: G \thra f(G) :
  g=f$ is proper, continuous and surjective. Hence $g^\vee: f(G)^\vee
  \rat G^\vee \cong A$. Because $A$ is torsion-free it follows that
  $f(G)^\vee$ is torsion-free and nonzero. The image $f(G)$ is
  embedded in $\Q^\vee$ and therefore $f(G)^\vee$ is a torsion-free
  image of $\Q$ and hence $f(G)^\vee \cong \Q$ and therefore $A$ has a
  direct summand isomorphic to $\Q$, contradicting that $A$ is
  clipped.
\end{proof}

The factorization of a finite dimensional connected compact group as
``totally factored $\times$ clipped'' has strong uniqueness
properties. We call two compact groups $G, H$ {\dbf nearly isomorphic} if

  $$\forall p\in\Primes  \exists \varphi \in{\operatorname{cHom}}(G,H), \psi\in{\operatorname{cHom}}(H,G), \varphi \psi = n 1_H, \psi \varphi = n 1_G, \gcd(p,n)=1.$$

  The exact same definition is used for discrete torsion-free groups
  of finite rank. Near--isomorphism is weaker than isomorphism but
  preserves important properties such a direct decompositions,
  \cite[Chapter~12.10, \S 10]{F15}. 

\begin{thm}\label{main uniqueness} Let $G$ be a finite dimensional
  connected compact group. Suppose that $G = T \times K = T' \times
  K'$ where $T,T'$ are totally factored and $K,K'$ are clipped. Then
  $T \cong_t T'$ and $K$ and $K'$ are nearly isomorphic.
\end{thm}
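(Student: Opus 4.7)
The plan is to transport the whole question across Pontryagin duality to the category of torsion-free abelian groups of finite rank, where the analogous uniqueness statement is already known, and then transport the conclusions back.

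Set $A = G^\vee$. Since $G$ is compact, connected, and of finite dimension, $A$ is torsion-free of finite rank. Applying $(-)^\vee$ to the two product decompositions and using \lemref{duals of sums and products}, we obtain discrete direct-sum decompositions
\[
A \;=\; T^\vee \oplus K^\vee \;=\; T'{}^\vee \oplus K'{}^\vee .
\]
By \propref{solenoids and rational groups} (and \thmref{rank one duals}), the dual of a solenoid is a rank--$1$ torsion-free group, so a totally factored $T$ dualizes to a direct sum of rank--$1$ torsion-free groups, i.e., a completely decomposable group; thus $T^\vee$ and $T'{}^\vee$ are completely decomposable. Since $K$ is clipped exactly when $K^\vee$ has no rank--$1$ summand (as noted just before \thmref{main decomposition of compact groups}), the summands $K^\vee$ and $K'{}^\vee$ are clipped in the sense of discrete torsion-free groups.

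Now invoke the uniqueness part of the Main Decomposition for torsion-free groups of finite rank \cite[Theorem~2.5]{MS18}: in any splitting of such a group as ``completely decomposable $\oplus$ clipped'', the completely decomposable summand is determined up to isomorphism and the clipped summand is determined up to near--isomorphism (cf.~\cite[Chapter~12.10, \S 10]{F15}). Applied to the two decompositions of $A$, this yields an algebraic isomorphism $T^\vee \cong T'{}^\vee$ and, for every $p\in\Primes$, a pair of homomorphisms $\varphi_0\colon K^\vee\to K'{}^\vee$, $\psi_0\colon K'{}^\vee\to K^\vee$ with $\psi_0\varphi_0 = n\,1_{K^\vee}$, $\varphi_0\psi_0 = n\,1_{K'{}^\vee}$, $\gcd(p,n)=1$.

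To dualize back, use that Pontryagin duality is an additive contravariant equivalence between discrete abelian groups and compact abelian groups, so $T^\vee \cong T'{}^\vee$ gives $T \cong_t T'$. For near--isomorphism, apply $(-)^\vee$ to $\varphi_0,\psi_0$ to obtain continuous morphisms $\varphi = \varphi_0^\vee\colon K'{}^{\vee\vee}\to K^{\vee\vee}$ and $\psi = \psi_0^\vee\colon K^{\vee\vee}\to K'{}^{\vee\vee}$, then compose with the canonical topological isomorphisms $K\cong_t K^{\vee\vee}$ and $K'\cong_t K'{}^{\vee\vee}$. Since the duality functor is additive and carries multiplication by the integer $n$ to multiplication by $n$, the identities $\psi_0\varphi_0 = n\,1_{K^\vee}$ and $\varphi_0\psi_0 = n\,1_{K'{}^\vee}$ translate to $\varphi\psi = n\,1_{K'}$ and $\psi\varphi = n\,1_K$, with the same $n$ and hence the same coprimality to $p$. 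This is the required near--isomorphism of $K$ and $K'$.

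The only genuine obstacle is the last step: one must check that near--isomorphism, defined in \cite{F15} for discrete groups, transfers cleanly through Pontryagin duality to the definition given just above \thmref{main uniqueness} for compact groups. This reduces to the additivity of $(-)^\vee$ and the fact that it sends the morphism ``multiplication by $n$'' on any abelian (topological) group to the corresponding morphism ``multiplication by $n$'' on its dual, which is immediate from the definition of $f^\vee$.
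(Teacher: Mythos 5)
Your proposal is correct and is essentially the paper's own proof, which is stated in one line as ``Dual of part of \cite[Theorem~2.5]{MS18}''; you have simply filled in the dualization details (via \lemref{duals of sums and products} and the fact that duality is additive and preserves multiplication by $n$), which is exactly what the authors intend. The only blemish is a harmless label swap at the end: with your $\varphi\colon K'\to K$ and $\psi\colon K\to K'$ the composites satisfy $\varphi\psi = n\,1_{K}$ and $\psi\varphi = n\,1_{K'}$, not the other way around, which still gives the required near--isomorphism.
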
 

\begin{proof} Dual of part of \cite[Theorem~2.5]{MS18}.
\end{proof}

\end{document}